\documentclass[pdflatex,sn-mathphys-num]{sn-jnl}

\usepackage{graphicx}%
\usepackage{multirow}%
\usepackage{amsmath,amssymb,amsfonts}%
\usepackage{amsthm}%
\usepackage{mathrsfs}%
\usepackage[title]{appendix}%
\usepackage{xcolor}%
\usepackage{textcomp}%
\usepackage{manyfoot}%
\usepackage{booktabs}%
\usepackage{algorithm}%
\usepackage{algorithmicx}%
\usepackage{algpseudocode}%
\usepackage{listings}%
\usepackage{enumitem}
\usepackage{hyperref}
\usepackage{xcolor}
\hypersetup{
    colorlinks,
    linkcolor={red!50!black},
    citecolor={blue!50!black},
    urlcolor={blue!80!black}
}
\usepackage[nameinlink,capitalize,noabbrev]{cleveref}
\usepackage[rightcaption]{sidecap}
\sidecaptionvpos{figure}{c}
\usepackage{tikz-cd}

    \newcommand{\R}
    {
        \mathbb{R}
    }
    
    \renewcommand{\dot}[1]
    {
    \overset{\raisebox{0ex}{\scalebox{0.45}{$\bullet$}}}{#1}
    }
    
    \newcommand{\restr}[2]
    {
    \left.\kern-\nulldelimiterspace
    #1 
    \vphantom{\big|}
    \right|_{#2}
    }

\theoremstyle{thmstyleone}%
\newtheorem{theorem}{Theorem}
\newtheorem{lemma}[theorem]{Lemma}

\newtheorem{question}[theorem]{Question}
\newtheorem{corollary}[theorem]{Corollary}

\theoremstyle{thmstyletwo}%
\newtheorem{example}{Example}%
\newtheorem{remark}{Remark}%

\theoremstyle{thmstylethree}%
\newtheorem{definition}{Definition}%

\begin{document}

\title[Article Title]{Brockett Openness Profiles and Gain-Limited Feedback Stabilization}

\author*[1]{\fnm{Bryce} \sur{Christopherson}}\email{bryce.christopherson@UND.edu}
\equalcont{These authors contributed equally to this work.}

\author[2]{\fnm{Farhad} \sur{Jafari}}\email{fjafari@UMN.edu}
\equalcont{These authors contributed equally to this work.}

\affil*[1]{\orgdiv{Department of Mathematics \& Statistics}, \orgname{University of North Dakota}, \orgaddress{\street{101 Cornell Street}, \city{Grand Forks}, \postcode{58202}, \state{North Dakota}, \country{United States}}}

\affil[2]{\orgdiv{Department of Radiology, Medical Physics Division}, \orgname{University of Minnesota Medical School}, \orgaddress{\city{ Minneapolis}, \postcode{55455}, \state{Minnesota}, \country{United States}}}


\abstract{Brockett's necessary condition asserts that a continuously stabilizable nonlinear control system must have a vector field that is open at the equilibrium.  We show that the quantitative data behind this openness condition constrains the possible growth of stabilizing feedbacks.  To a system vector field $f$, we associate its openness profile $\Omega_f(r)=\sup\{\rho:\mathbb{B}_\rho(0)\subset f(\mathbb{B}_r(0,0))\}$, so that Brockett's condition becomes $\Omega_f(r)>0$ for all sufficiently small $r>0$.  If a feedback $u$ satisfies $\|u(x)\|\leq d(\|x\|)$, then the openness profile of the closed-loop field $F_u(x)=f(x,u(x))$ satisfies $\Omega_{F_u}(r)\leq \Omega_f\!\left(\sqrt{r^2+d(r)^2}\right)$.  Consequently, any prescribed lower openness rate for the closed-loop dynamics yields a necessary lower bound on the feedback growth.  For systems with $\Omega_f(r)\lesssim r^q$, linear-rate closed-loop openness forces $d(r)\gtrsim r^{1/q}$, and this exponent is sharp in elementary polynomial examples.  Thus Brockett's condition is not merely a binary topological obstruction; its quantitative profile governs gain requirements for stabilizing feedback.}

\keywords{nonlinear stabilization, feedback laws, gain constraints, Brockett's condition, continuous openness}


\pacs[MSC Classification]{Primary 93D15, 93C10; Secondary 34D20, 54C60}

\maketitle


\section{Introduction}


    Brockett's theorem is one of the central topological obstructions to continuous feedback stabilization. It says that if a nonlinear control system is continuously stabilizable at an equilibrium, then the system vector field must be open at that equilibrium. This condition is usually used as a binary test. In this paper we extract quantitative information from the same datum.  Specifically, we will use this quantitative refinement to address the following general question:
    
    
    \begin{question}[Gain-Limited Stabilization Problem]\label{question}
        Consider a control system of the form 
        
        
        \begin{align}
            \dot{x}=f(x,u), & \enskip t \geq 0, \label{control sys}
        \end{align}
        
        
        \noindent which is locally asymptotically stabilizable at the equilibrium point $x_e$.  Let $d:[0,\epsilon)\to[0,\infty)$ be a nondecreasing function with $d(r)\to 0$ as $r \to 0$. Under what conditions does there exist a feedback controller $u(x)$ such that $u(x)$ locally asymptotically stabilizes \eqref{control sys} at $x_e$ and satisfies 
        \begin{equation}\label{question rate}
            \|u(x)\| \leq d(\|x - x_e\|)
        \end{equation} for all $x$ sufficiently close to $ x_e$?
    \end{question}
    
    
    Notice, Question~\ref{question} is simple in the case of $d$ a constant function and $u$ continuous.  That is, it is always possible:  If we assume $u(x)$ is continuous and stationary and that \eqref{control sys} can be stabilized at $x_e$, then we can stabilize the system with a control of arbitrarily small norm on a sufficiently small ball around $x_e$.  In this case, what is usually studied is the trade off between the size of this ball and the norm bound on the control.  While interesting in its own right, this formulation of the problem fails to capture the fine-grained details regarding the growth rate of the control near the equilibrium, which is often of practical interest.  Additionally, the tradeoff in this problem is `non-local' in the sense that it depends on characterizations of the system's trajectories behavior away from the equilibrium point. 
    
    The purpose of this paper is not to solve the gain-limited stabilization problem in full generality.  Rather, we derive necessary conditions on \(d\) that follow from Brockett's openness obstruction.  Brockett's theorem is usually read as a binary obstruction: the vector field is either open at the equilibrium or it is not.  We show that, once openness holds, the quantitative rate at which the image of small neighborhoods fills a neighborhood of the origin constrains the possible growth and decay rates of any stabilizing feedback.  Thus Brockett-type data does not merely decide whether stabilization is possible; it also constrains the gain profile of stabilizing feedbacks.


\section{Background and Technical Preliminaries}

    
    We consider autonomous control systems of the form \eqref{control sys}.  More specifically, let  $\mathcal{X} \times \mathcal{U} \subseteq \R^n \times \R^m$ be a neighborhood of the origin. Unless otherwise stated, we assume that the function $f$ on the right-hand side of \eqref{control sys} satisfies both conditions $f(0,0)=0$ and $f \in \mathrm{C}\left (\mathcal{X} \times \mathcal{U}, \R^n\right )$.

    Here, we are interested in {\it stabilizing} these systems. That is (as in, e.g., \cite[Definition 10.11]{coron1}), we say the system is \textit{locally asymptotically stabilizable by means of feedback laws} if there exists a neighborhood of the origin $\mathcal{O}\subseteq \mathcal{X}$ and a feedback $u  \colon  \mathcal{O} \rightarrow \mathcal{U}$ satisfying $u(0)=0$ that renders the origin a locally asymptotically stable equilibrium \cite[Definition 2.1]{byrnes} of the closed-loop system $\dot{x}=f\big (x,u(x) \big )$.  For clarity, we restrict ourselves here to feedbacks yielding a continuous closed-loop vector field with unique forward solutions; this aligns with, e.g.,  Coron and Zabczyk \cite{coron2,zab}, but avoids Filippov or Krasovskii notions of solutions. Our results therefore apply to continuous stabilizers in the classical sense. 

    
    \begin{remark}
        Note that the requirement that $u(0)=0$ is a mostly-harmless simplification and is no more of an imposition than the requirement that $f(0,0)=0$.  That is, if we are in some situation where we stabilize our system $\dot{x}=f(x,u)$ with a feedback $u(x)$ such that $u(0)=u_0\neq 0$, then we must have $f(0,u_0) = 0$.  So, we could just consider the system $\tilde{f}(x,u) := f(x,u+u_0)$ instead, which certainly has $\tilde{f}(0,0) = 0$ and is stabilized by the feedback law $\tilde{u}(x) := u(x)-u_0$ that satisfies $\tilde{u}(0)=0$.
    \end{remark}
    
    
    A variety of conditions describing whether the system \eqref{control sys} is locally asymptotically stabilizable by means of continuous feedback laws have been derived. As a nowhere-near-comprehensive listing, see, e.g., the control Lyapunov techniques described in \cite{artstein,Lafferriere,braun,goebel}, the topological conditions in \cite{brockett,byrnes,coron2,coron1, gupta2017linear, christopherson2019, christopherson2022continuous}, the homological conditions in \cite{coron2}, standard Lyapunov-theoretic techniques derived from solutions to a Hamilton-Jacobi-Bellman equation in \cite{hermes}, polynomial stabilization in \cite{jammazi}, and compilations of many others in \cite{coron1, sontag1,sontag2,onishchenko1, onishchenko2}.  We use none of these more sophisticated approaches and work directly with Brockett's well-known openness condition and the graph map $x\mapsto (x,u(x))$.
    

\section{Openness Profiles and Brockett's Condition}

    
    A necessary condition of Brockett \cite{brockett} for feedback stabilization is that the vector field inducing the system must satisfy a certain `local openness' property at the equilibrium for such a controller to exist.
    
    \begin{definition}[Locally Open]\label{open}
        Let $X$ and $Y$ be topological spaces.  A mapping $f \colon X\rightarrow Y$ is said to be \emph{open at  $x\in X$} if we have $f(x) \in \textrm{int}f(\mathcal{O})$ for any neighborhood $\mathcal{O}$ of $x$.
    \end{definition}
    
    The origin of this local openness property goes back to the classical Banach-Schauder open mapping theorem of functional analysis, though it should be noted that the class of \textit{open mappings} is a more restrictive one than that used by Brockett.  As refined by Coron \cite{coron2} and Zabczyk \cite{zab}, Brockett's obstruction is the following one: 
    
    
     \begin{theorem}[Brockett's Theorem - Continuous Extension] \label{brocketts - continuous extension} 
        If the system \eqref{control sys} is locally asymptotically stabilizable by means of continuous feedback laws, then it is necessary that $f$ is open at the origin.
    \end{theorem}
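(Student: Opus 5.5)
The plan is to follow the classical degree-theoretic route of Brockett, as refined by Zabczyk and Coron, for continuous closed-loop fields with unique forward solutions. Let $u$ be a continuous stabilizing feedback on $\mathcal{O}$ with $u(0)=0$, and set $F(x)=f(x,u(x))$. Since the graph map $x\mapsto (x,u(x))$ sends small neighborhoods of $0$ into any prescribed neighborhood of $(0,0)$, we have $F(\mathcal{N})\subseteq f(\mathcal{W})$ whenever $\mathcal{N}$ is a small enough ball with $(\mathcal{N}, u(\mathcal{N}))\subseteq\mathcal{W}$. It therefore suffices to prove that $F$ itself is open at $0$: for every small $r>0$ there is a $\rho>0$ with $\mathbb{B}_\rho(0)\subseteq F(\mathbb{B}_r(0))$.

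To show this, I would first invoke a converse Lyapunov theorem for continuous vector fields with uniqueness (Kurzweil/Massera). It gives a smooth proper Lyapunov function $V$ on a neighborhood of $0$ with $\nabla V\cdot F<0$ away from the origin. Choose $c>0$ so that the sublevel set $K=\{V\le c\}$ lies inside $\mathbb{B}_r(0)$.

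The key step is a degree computation. I would show that $\deg(F,\operatorname{int}K,0)=(-1)^n$ by homotoping $F$ to $-\mathrm{Id}$ through vector fields that do not vanish on $\partial K$. There are two natural ways to do this. The first follows Krasnosel'skii and Zabczyk: use the flow $\phi_t$ of $F$ and the homotopy $H(x,s)=\frac{1}{s}\bigl(\phi_{s/(1-s)}(x)-x\bigr)$ for $s\in(0,1)$, extended by $F(x)$ at $s=0$ and by $-x$ at $s=1$. Nonvanishing on the boundary follows because a zero would give a periodic orbit, which asymptotic stability rules out; continuity at $s=1$ uses the fact that trajectories converge to $0$ uniformly on $K$. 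The second route is simpler: homotope $F$ to $-\nabla V$ via $(1-s)F-s\nabla V$. This is nonzero on $\partial K$ because $\nabla V\cdot\bigl((1-s)F - s\nabla V\bigr)<0$ there. Then $-\nabla V$ is the outward-pointing inverse of the normal field to a sublevel set that is a homotopy ball, so its degree is $(-1)^n$. The claim that the sublevel set is a homotopy ball (via Wilson's theorem) is the delicate point. To avoid it, I would instead use the flow homotopy, which only needs $K$ to be forward invariant and compact.

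Once the degree is nonzero, openness follows. The quantity $\delta=\min_{\partial K}\|F\|$ is positive, and degree is constant for targets $y$ with $\|y\|<\delta$. Hence $\deg(F,\operatorname{int}K,y)\neq0$, so $y\in F(K)\subseteq F(\mathbb{B}_r(0))$. This yields $\mathbb{B}_\delta(0)\subseteq F(\mathbb{B}_r(0))\subseteq f(\mathcal{W})$, which is openness of $f$ at the origin.

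I expect the main obstacle to be the continuity of the flow homotopy at $s\to0$ and $s\to1$. Both rely on continuous dependence of solutions on initial data, which is where the uniqueness hypothesis on forward solutions is essential, together with uniform attractivity on the compact set $K$.
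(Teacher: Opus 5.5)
Your proof is correct and is essentially the route the paper relies on. The paper gives no argument of its own for this theorem; it quotes it from Coron and Zabczyk, whose proofs rest on the same Krasnosel'skii--Zabreiko computation $\deg(F,\operatorname{int}K,0)=(-1)^n$ for an asymptotically stable continuous closed-loop field, together with local constancy of the degree in the target and the graph-map reduction from $F$ to $f$.

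Two small refinements. First, continuity of your homotopy at $s=0$ needs only $\phi_t(x)-x=\int_0^t F(\phi_\tau(x))\,d\tau$ and uniform continuity of $F$ on $K$, and at $s=1$ it needs only the uniform attraction on $K$ that $V$ already provides, so forward uniqueness (via continuous dependence) is really used only for $0<s<1$. Second, your route via $(1-s)F-s\nabla V$ is not delicate: you can run the same flow homotopy for the smooth field $-\nabla V$, for which $V$ is again a strict Lyapunov function. That bypasses Wilson's theorem, and since Kurzweil's theorem does not need uniqueness, it removes the forward-uniqueness hypothesis on the closed loop altogether.
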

    
    
    While a mapping may have the openness property at a given point, this property alone does not admit any measuring device to quantify the rate at which the mapping  is open about that point. Some effort has been devoted to using the variational-analytic property of \textit{linear openness} (also referred to as {\it covering} or {\it openness at a linear rate}, see, e.g., \cite{mordukhovich3,rockafellar}) in conjunction with Brockett's criterion \cite{christopherson2022continuous,christopherson2019}.  The idea behind linear openness is to provide a quantitative measure of `how open' a mapping is at a given point.  Here, we introduce a slightly different device for quantifying the rate of local openness.  Variants of this notion have appeared in several other contexts  and under different names; see, for example,  \cite{ioffe2013nonlinear,borwein1988verifiable}.  We continue that tradition here. 

    
    \begin{definition}\label{cont open def}
        Let $X$, $Y$ be metric spaces and let $f:X \rightarrow Y$.  For $x \in X$ and a continuous, positive definite, monotonically increasing function $ g $, we say that \emph{$f$ is continuously open at $x$ with rate $g$} if $\mathbb{B}_{g(r)}\big(f(x)\big)\subseteq f(\mathbb{B}_r(x))$ for all sufficiently small $r>0$.  Likewise, we say $f$ is \emph{continuously open at $x$} if $f$ is continuously open for some rate function satisfying the above.
    \end{definition}


    This definition allows us to measure how rapidly the image of a shrinking ball must expand, which we will show directly determines `how much' control is required to stabilize the system.
    
    
    \begin{example}\label{bconstant example}
        Suppose $T$ is a surjective bounded linear map between Banach spaces. Then, for every $0<\kappa<\Gamma(T)$, $T$ is open at the origin at the linear rate $g(r)=\kappa r$, where $\Gamma(T)$ is the Banach constant of $T$; i.e., the quantity $\Gamma(T):=\mathrm{sup}\big\{r\geq 0\;\big|\;\mathbb{B}_r\subseteq T\big(\mathbb{B}\big)\big \}$.

        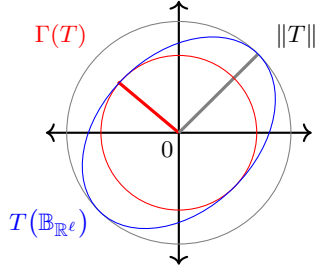
\begin{SCfigure}[2.5][ht]
            \caption{The Banach constant $\Gamma(T)$ and norm $\|T\|$ of a linear operator $T$ shown as the radii of the balls contained in and containing the $T$-image of the unit ball $T(\mathbb{B})$, respectively.} \begin{tikzpicture}[scale=0.5]
                \draw (1.7,2) node[below]{$0$};
                \draw[text=red] (-1.1,5.1) node[below]{$\Gamma(T)$};
                \draw (5.1,5.1) node[below]{$\|T\|$};
                \draw[text=blue] (-1.4,0.2) node[below]{$T\big (\mathbb{B}_{\R^\ell} \big )$};
                \draw[<->, thick] (2,-1.5) -- (2,5.5);
                \draw[<->, thick] (-1.5,2) -- (5.5,2);
                \draw[very thick,red] (2,2) -- (0.4,3.35); 
                \draw[very thick,gray] (2,2) -- (4.09,4.09);
                \draw[gray] (2,2) circle (2.95cm);
                \draw[red] (2,2) circle (2.05cm);
                \draw[blue,rotate around={-45:(2,2)}] (2,2) ellipse (2.05cm and 2.95cm);
            \end{tikzpicture}\label{b constant fig} 
        \end{SCfigure}
    \end{example}
    
    Continuous openness is closely related to the classical notion of linear openness in variational analysis, which first appeared in \cite{milyutin} under the name of ``covering in a neighborhood" and has since been substantially developed \cite{mordukhovich2,rockafellar,ioffe}.  In finite dimensions, \(f\) is linearly open at the origin with modulus \(\kappa>0\) precisely when
    \[
    \mathbb{B}_{\kappa r}(0)\subseteq f(\mathbb{B}_r(0))
    \]
    for all sufficiently small \(r>0\).  It is hopefully straightforward to see that linear openness yields its openness counterpart from Definition~\ref{open}, while the opposite implication fails. The point of Definition~\ref{cont open def} is to retain nonlinear rates, which are invisible to a purely linear openness modulus.
    
    Unlike linear openness, however, continuous openness at a point is, in fact, equivalent to openness at a point for continuous mappings (cf. Theorem~\ref{open cont open equiv}), so this tool is always available to us when studying stabilizable systems.  As suggested by Example~\ref{bconstant example}, we adopt the following notation:
    

    \begin{definition}\label{b constant def}
        Let $X$ be a normed space, let $x^* \in X$, and let $A\subseteq X$.  We say the \emph{inscribed radius} (or \emph{inradius}) \emph{of $A$ at $x^*$} is the quantity
        $$
            \Gamma_{x^*}(A)=\sup \{r\geq 0:\mathbb{B}_r(x^*)\subseteq A\}.
        $$
    \end{definition}
    
    
    Now, we will use this notion to show that continuous openness at a point and openness at a point are, in fact, equivalent.

    \begin{theorem}\label{open cont open equiv}
        Let $X$, $Y$ be normed vector spaces and suppose $f:X \rightarrow Y$.  For $x^*\in X$, define
        $$
            \Omega_{f,x^*}(r):=\Gamma_{f(x^*)}\big(f(\mathbb{B}_r(x^*))\big).
        $$
        Then $f$ is open at $x^*$ if and only if $\Omega_{f,x^*}(r)>0$ for every sufficiently small $r>0$.  Moreover, if $f$ is open at $x^*$, then $f$ is continuously open at $x^*$.  Finally, $\Omega_{f,x^*}$ is maximal in the following sense: if $f$ is continuously open at $x^*$ at a rate of $h$, then $h(r)\leq \Omega_{f,x^*}(r)$ for all sufficiently small $r>0$.
    \end{theorem}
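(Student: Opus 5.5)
The plan is to prove the three claims in order: the characterization of openness, then the construction of a continuous rate, then maximality. Throughout I write $\Omega=\Omega_{f,x^*}$. The first thing to record is that $\Omega$ is nondecreasing in $r$. Indeed, if $r<r'$ then $f(\mathbb{B}_r(x^*))\subseteq f(\mathbb{B}_{r'}(x^*))$, so any ball about $f(x^*)$ contained in the former is contained in the latter.

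\emph{Characterization of openness.} Suppose first that $f$ is open at $x^*$. Each ball $\mathbb{B}_r(x^*)$ is a neighborhood of $x^*$, so $f(x^*)\in\mathrm{int}\, f(\mathbb{B}_r(x^*))$. Hence some ball $\mathbb{B}_\rho(f(x^*))$ with $\rho>0$ lies in this image, and so $\Omega(r)\ge\rho>0$. Conversely, suppose $\Omega(r)>0$ for all $r\in(0,r_0]$, and let $\mathcal{O}$ be any neighborhood of $x^*$. Then $\mathcal{O}$ contains $\mathbb{B}_r(x^*)$ for some $r\le r_0$. Since $\Omega(r)>0$, there is some $\rho>0$ with $\mathbb{B}_\rho(f(x^*))\subseteq f(\mathbb{B}_r(x^*))\subseteq f(\mathcal{O})$, and therefore $f(x^*)\in\mathrm{int} f(\mathcal{O})$.

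\emph{Construction of a continuous rate.} The obstacle here is that $\Omega$ need not be continuous, need not be strictly increasing, and could even be $+\infty$. So I will not use $\Omega$ itself as the rate; instead I regularize it from below by integration. Fix $r_0>0$ with $\Omega>0$ on $(0,r_0]$, and set $\tilde\Omega(s)=\min\{\Omega(s),1\}$. This function is positive, bounded, and nondecreasing, hence measurable. Define
$$h(r)=\frac{1}{r_0}\int_0^r\tilde\Omega(s)\,ds,\qquad 0\le r\le r_0 .$$
Then $h$ is continuous, $h(0)=0$, and $h$ is strictly increasing because the integrand is positive. By monotonicity of $\tilde\Omega$,
$$h(r)\le \frac{r}{r_0}\,\tilde\Omega(r)<\tilde\Omega(r)\le\Omega(r)\qquad\text{for } 0<r<r_0 .$$
Since $h(r)$ is strictly smaller than the supremum $\Omega(r)$, there is a radius $\rho>h(r)$ with $\mathbb{B}_\rho(f(x^*))\subseteq f(\mathbb{B}_r(x^*))$. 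It follows that $\mathbb{B}_{h(r)}(f(x^*))\subseteq f(\mathbb{B}_r(x^*))$. Thus $f$ is continuously open at $x^*$ with rate $h$. The strict inequality is important: it avoids any question of whether the supremum defining $\Omega(r)$ is attained. If needed, $h$ can be extended beyond $r_0$ in any continuous increasing way, since the definition only concerns small $r$.

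\emph{Maximality.} If $\mathbb{B}_{h(r)}(f(x^*))\subseteq f(\mathbb{B}_r(x^*))$ for all small $r$, then $h(r)$ is one of the radii admitted in the supremum defining $\Gamma_{f(x^*)}$. Hence $h(r)\le\Omega(r)$ immediately. The only genuinely delicate step in the whole argument is the regularization in the previous paragraph. The integral-average device handles it, and no continuity of $f$ is needed anywhere.
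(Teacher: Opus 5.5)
Your proof is correct. The characterization of openness, the monotonicity of $\Omega_{f,x^*}$, and the maximality step match the paper's arguments. The genuine difference is in how you build the continuous rate.

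\emph{The paper's construction.} It takes dyadic radii $r_j=2^{-j}R$ and recursively picks $0<a_j<\min\{a_{j-1},2^{-j},\Omega_{f,x^*}(r_{j+1})\}$. It then interpolates linearly, so that $g(r)\le a_j<\Omega_{f,x^*}(r_{j+1})\le\Omega_{f,x^*}(r)$ on $[r_{j+1},r_j]$. Each cap enforces one property. The first gives strict monotonicity. The second forces $g(r)\to 0$, which is needed because the profile need not tend to $0$ when $f$ is discontinuous. The third keeps $g$ strictly below the profile.

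\emph{Your construction.} The average $h(r)=r_0^{-1}\int_0^r\min\{\Omega_{f,x^*}(s),1\}\,ds$ gets continuity, $h(0)=0$, and strict monotonicity for free. The strict comparison $h(r)<\Omega_{f,x^*}(r)$ follows from monotonicity of the profile in one line. The integral forces $h(r)\to0$ automatically, so the truncation at $1$ is needed only to keep the integrand finite when the profile is infinite. You also gain a closed-form rate.

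\emph{What you give up.} Your rate is quantitatively weaker: it sits below the profile by a factor of order $r/r_0$. For $\Omega_{f,x^*}(r)=r^q$ it is of order $r^{q+1}$. By contrast, choosing each $a_j$ near its upper bound keeps the paper's dyadic rate of the same order $r^q$ when $q\ge 1$. Since the theorem only asserts that some continuous rate exists, this loss does not matter here.
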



    \begin{proof}
        Suppose first that \(\Omega_{f,x^*}(r)>0\) for every sufficiently small \(r>0\).  Let $\mathcal{O}$ be a neighborhood of \(x^*\). Choose \(r>0\) sufficiently small so that $\mathbb{B}_r(x^*)\subset \mathcal{O}$ and $\Omega_{f,x^*}(r)>0$. Then there exists $\rho>0$ such that
        \[
            B_\rho(f(x^*))\subset f(B_r(x^*))\subset f(\mathcal{O}).
        \]
        Hence \(f(x^*)\in \operatorname{int} f(\mathcal{O})\), and so \(f\) is open at \(x^*\).

        Conversely, suppose that $f$ is open at $x^*$.  Then $f(\mathbb{B}_r(x^*))$ is a neighborhood of $f(x^*)$ for every sufficiently small $r>0$.  Thus there exists some $\rho>0$ such that $\mathbb{B}_{\rho}\big(f(x^*)\big)\subseteq f(\mathbb{B}_r(x^*))$.  Hence $\Omega_{f,x^*}(r)>0$ for every sufficiently small $r>0$.

        Now, we show the function $\Omega_{f,x^*}$ is monotonically increasing.  Indeed, if $0<r_1<r_2$, then $\mathbb{B}_{r_1}(x^*)\subseteq \mathbb{B}_{r_2}(x^*)$, so $f(\mathbb{B}_{r_1}(x^*))\subseteq f(\mathbb{B}_{r_2}(x^*))$.  Therefore every ball centered at $f(x^*)$ contained in $f(\mathbb{B}_{r_1}(x^*))$ is also contained in $f(\mathbb{B}_{r_2}(x^*))$, and so $\Omega_{f,x^*}(r_1)\leq \Omega_{f,x^*}(r_2)$.

        It remains to construct a continuous rate.  Fix $R>0$ sufficiently small so that $\Omega_{f,x^*}(r)>0$ for every $0<r\leq R$.  Set $r_j:=2^{-j}R$ for $j=0,1,2,\ldots$.  Choose positive numbers $a_j$ recursively so that $0<a_0<\Omega_{f,x^*}(r_1)$ and, for $j\geq 1$,
        $$
            0<a_j<\min\left\{a_{j-1},2^{-j},\Omega_{f,x^*}(r_{j+1})\right\}.
        $$
        Define $g(0)=0$, set $g(r_j)=a_j$, and extend $g$ linearly on each interval $[r_{j+1},r_j]$.  Then $g$ is continuous, positive definite, and monotonically increasing on $[0,R]$.  Moreover, if $r\in [r_{j+1},r_j]$, then $g(r)\leq a_j<\Omega_{f,x^*}(r_{j+1})\leq \Omega_{f,x^*}(r)$.  Therefore
        $$
            \mathbb{B}_{g(r)}\big(f(x^*)\big)\subseteq f(\mathbb{B}_r(x^*))
        $$
        for every $0<r\leq R$.  Hence $f$ is continuously open at $x^*$.

        Finally, suppose $f$ is continuously open at $x^*$ at a rate of $h$.  Then, $\mathbb{B}_{h(r)}\big(f(x^*)\big)\subseteq f(\mathbb{B}_r(x^*))$ for all sufficiently small $r>0$.  By the definition of $\Omega_{f,x^*}(r)$ as the supremum of all radii $\rho$ for which $\mathbb{B}_{\rho}\big(f(x^*)\big)\subseteq f(\mathbb{B}_r(x^*))$, we immediately have $h(r)\leq \Omega_{f,x^*}(r)$ for all sufficiently small $r>0$.
    \end{proof}
    

    Accordingly, we define the \textit{openness profile} of a vector field using this maximal openness profile.  Continuous openness rates should be understood as continuous positive minorants of this profile.
    

    \begin{definition}\label{openness profile definition}
        To a system vector field $f$ in \eqref{control sys}, we associate its \textit{openness profile} 
        \[
        \Omega_f(r)=\Gamma_{f(0,0)}\big(f\big(\mathbb{B}_r(0,0)\big)\big)=\sup\{\rho:\mathbb{B}_\rho(0)\subset f(\mathbb{B}_r(0,0))\}.
        \]
    \end{definition}

    
    In this notation, Brockett's condition becomes the following:

    
    \begin{lemma}
        Let $f:\mathbb{R}^{n+m}\to\mathbb{R}^n$ be continuous with $f(0,0)=0$. Then $f$ is open at the origin if and only if $\Omega_f(r)>0$ for every sufficiently small $r>0$.
    \end{lemma}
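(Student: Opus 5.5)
This lemma is the specialization of Theorem~\ref{open cont open equiv} to the system vector field, so the plan is to instantiate that theorem rather than argue from scratch. Take $X=\mathbb{R}^{n+m}=\mathbb{R}^n\times\mathbb{R}^m$ with any fixed norm inducing the product topology, for instance the Euclidean norm on $\mathbb{R}^{n+m}$, so that $\mathbb{B}_r(0,0)$ is the ball used in Definition~\ref{openness profile definition}. Take $Y=\mathbb{R}^n$ and $x^*=(0,0)$. Since $f(0,0)=0$, comparing Definition~\ref{openness profile definition} with the definition of $\Omega_{f,x^*}$ in Theorem~\ref{open cont open equiv} gives
\[
\Omega_f(r)=\Gamma_{0}\big(f(\mathbb{B}_r(0,0))\big)=\Gamma_{f(x^*)}\big(f(\mathbb{B}_r(x^*))\big)=\Omega_{f,(0,0)}(r)
\]
for every $r>0$.

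Theorem~\ref{open cont open equiv} then says that $f$ is open at $(0,0)$ if and only if $\Omega_{f,(0,0)}(r)>0$ for all sufficiently small $r>0$. That is exactly the claimed equivalence.

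The one point to check is that the theorem's hypotheses allow $f$ to be defined only on the neighborhood $\mathcal{X}\times\mathcal{U}$ of the origin, rather than on all of $\mathbb{R}^{n+m}$ as the lemma is phrased. Neither direction of the theorem's proof uses global definedness. Both directions only involve balls $\mathbb{B}_r(x^*)$ with $r$ sufficiently small, and these lie inside $\mathcal{X}\times\mathcal{U}$. So the argument carries over verbatim.

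For the same reason, continuity of $f$ is not needed for the equivalence. It is stated in the lemma only for consistency with the standing assumptions on $f$.

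I do not expect a genuine obstacle. The only step requiring attention is the identification $\Omega_f=\Omega_{f,(0,0)}$, which depends on $f(0,0)=0$ so that the inradius is taken at the correct center. If one preferred a self-contained argument, the two directions of the theorem's proof would be repeated directly:
\begin{itemize}
\item If $\Omega_f(r)>0$, then $f(\mathbb{B}_r(0,0))$ contains a ball about $0$. Since $\mathbb{B}_r(0,0)$ sits inside any given neighborhood for small $r$, the origin is interior to the image of that neighborhood.
\item Conversely, openness at the origin makes $f(\mathbb{B}_r(0,0))$ a neighborhood of $0$, so it contains some ball $\mathbb{B}_\rho(0)$ with $\rho>0$, and hence $\Omega_f(r)\ge\rho>0$.
\end{itemize}
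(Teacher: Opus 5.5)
Your proposal is correct and takes essentially the paper's approach. The paper proves the lemma by exactly the direct two-direction argument in your bullet points, which is the first part of the proof of Theorem~\ref{open cont open equiv} specialized to $x^*=(0,0)$, with $f(0,0)=0$ giving $\Omega_f=\Omega_{f,(0,0)}$ as you note; your remarks that continuity is not needed and that only small balls inside the domain matter are also accurate.
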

    
    
    \begin{proof}
        If $\Omega_f(r)>0$ for every sufficiently small $r>0$, then for each such $r$ there exists $\rho>0$ such that $\mathbb{B}_\rho(0)\subset f(\mathbb{B}_r(0,0))$.  Thus $f(\mathbb{B}_r(0,0))$ is a neighborhood of $0=f(0,0)$ for every sufficiently small $r$, and $f$ is open at the origin.  Conversely, if $f$ is open at the origin, then $f(\mathbb{B}_r(0,0))$ is a neighborhood of the origin in $\mathbb{R}^n$ for every sufficiently small $r>0$.  Hence there exists some $\rho>0$ such that $\mathbb{B}_\rho(0)\subset f(\mathbb{B}_r(0,0))$, which says precisely that $\Omega_f(r)>0$.
    \end{proof}
    

    \begin{example}\label{nonholonomic integrator example}
        Consider Brockett's nonholonomic integrator $f(x,u)=\begin{bmatrix} u_1 & u_2 & x_1u_2-x_2u_1 \end{bmatrix}^T$. This is a classic example of a nonholonomic system that fails to have a continuous stabilizing feedback. To wit, observe that if the first two components of $f(x,u)$ vanish, then $u_1=u_2=0$, and therefore the third component $x_1u_2-x_2u_1$ also vanishes.  Hence $f(\mathbb{B}_r(0,0))$ cannot contain any point of the form $(0,0,\epsilon)$ with $\epsilon\neq0$.  Consequently, no ball centered at $0$ in $\mathbb{R}^3$ is contained in $f(\mathbb{B}_r(0,0))$, and $\Omega_f(r)=0$.  Thus the openness profile recovers the usual Brockett obstruction in this case.
    \end{example}

    
    The benefit of this definition is that it allows us to say more than this as well:  quantitatively, the openness profile tells us how fast $\Omega_f(r)$ vanishes as $r\rightarrow 0$.


\section{Control Limitations from Brockett Datum}


Using the graph map $x \mapsto \big(x,u(x)\big)$, we get an easy necessary condition for Question~\ref{question} to be answered in the affirmative.  The strength of this elementary inequality is that it converts estimates on the open-loop Brockett profile into explicit lower bounds on admissible feedback gain profiles.

    
    \begin{theorem}[Graph-limited Brockett inequality]\label{graph limited brocket inequality}
        Let $f:\mathbb{R}^n\times\mathbb{R}^m\to\mathbb{R}^n$ be continuous with $f(0,0)=0$. Let $u$ be a feedback defined near $0$ with $u(0)=0$, and set $F_u(x):=f(x,u(x))$.  Let $d:[0,\epsilon)\to[0,\infty)$ be nondecreasing, and assume $\|u(x)\|\leq d(\|x\|)$ for $x$ near $0$.  Then, for all sufficiently small $r>0$, 
        \[
            \Omega_{F_u}(r) \leq \Omega_f\left(\sqrt{r^2+d(r)^2}\right).
        \]
    \end{theorem}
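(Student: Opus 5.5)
The plan is to prove the inequality through a set inclusion and the monotonicity of the inradius. First, for $r>0$ small enough that $\|u(x)\|\le d(\|x\|)$ holds on $\mathbb{B}_r(0)$ (and $r<\epsilon$), we show that the graph map $G(x):=(x,u(x))$ sends $\mathbb{B}_r(0)$ into $\mathbb{B}_{s}(0,0)\subset\mathbb{R}^{n+m}$, where $s:=\sqrt{r^2+d(r)^2}$. Indeed, for $\|x\|<r$ we have $\|G(x)\|^2=\|x\|^2+\|u(x)\|^2$. Since $d$ is nondecreasing, $\|u(x)\|\le d(\|x\|)\le d(r)$, and therefore $\|G(x)\|^2<r^2+d(r)^2$. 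Here we use the Euclidean product norm on $\mathbb{R}^n\times\mathbb{R}^m$, which is implicit in the $\sqrt{\cdot}$ of the statement.

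Next we write $F_u=f\circ G$. The inclusion above then gives
\[
F_u(\mathbb{B}_r(0))=f\big(G(\mathbb{B}_r(0))\big)\subseteq f\big(\mathbb{B}_{s}(0,0)\big).
\]
We also have $F_u(0)=f(0,u(0))=f(0,0)=0$, so both inradii are taken about the same center $0$.

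Finally, the inradius $\Gamma_0$ of Definition~\ref{b constant def} is monotone under inclusion of sets: if $A\subseteq B$, then every ball $\mathbb{B}_\rho(0)\subseteq A$ also satisfies $\mathbb{B}_\rho(0)\subseteq B$. Hence $\Gamma_0(A)\le\Gamma_0(B)$, which is the same argument used for monotonicity in the proof of Theorem~\ref{open cont open equiv}. Applying this to the inclusion above gives $\Omega_{F_u}(r)\le\Omega_f(s)$. If the left side is zero, or the supremum is over the empty set, the inequality is trivial.

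No step is a real obstacle. The only care needed is bookkeeping. First, $r$ must be small enough that $\mathbb{B}_r(0)$ lies in the domain of $u$ and in the region where the gain bound holds, and also $r<\epsilon$ so that $d(r)$ is defined. Second, we need the strict inequality $\|G(x)\|<s$ so that the image lands in the open ball. This follows from $\|x\|<r$ even when $d(r)=0$.
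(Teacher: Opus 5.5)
Your proposal is correct and is essentially the paper's proof: the graph map $G_u(x)=(x,u(x))$ sends $\mathbb{B}_r(0)$ into $\mathbb{B}_{\sqrt{r^2+d(r)^2}}(0,0)$, so $F_u(\mathbb{B}_r(0))\subseteq f(\mathbb{B}_{\sqrt{r^2+d(r)^2}}(0,0))$, and monotonicity of the inradius gives the inequality. Your extra bookkeeping makes explicit three points the paper leaves implicit: the strict inequality needed to land in the open ball, the fact that $F_u(0)=0$ so both inradii are centered at the same point, and the requirement $r<\epsilon$.
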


    
    \begin{proof}
        Write $G_u(x):=(x,u(x))$, so $F_u=f \circ G_u$.  If $x \in \mathbb{B}_r(0)$ and $r>0$ is sufficiently small, then the monotonicity of $d$ gives
        \[
            \|G_u(x)\|=\sqrt{\|x\|^2+\|u(x)\|^2}\leq \sqrt{r^2+d(r)^2}.
        \]
        Hence $G_u\big(\mathbb{B}_r(0)\big) \subseteq \mathbb{B}_{\rho(r)}(0,0)$ for $\rho(r):=\sqrt{r^2+d(r)^2}$.  Therefore,
        \[
            F_u\big(\mathbb{B}_r(0)\big)=f\big(G_u\big(\mathbb{B}_r(0)\big)\big) \subseteq f\big(\mathbb{B}_{\rho(r)}(0,0)\big).
        \]
        Recasting in the language of Definition~\ref{openness profile definition}, the result follows.
    \end{proof}
    

    To summarize, Theorem~\ref{graph limited brocket inequality} tells us that if the feedback is gain-limited by $d$, then the closed-loop vector field cannot have more local openness than the open-loop vector field can supply along the feedback graph. As a consequence, if one has a desired closed-loop system, then its openness profile must be achievable with the constraint posed by the gain limitation $d$ in Question~\ref{question}.  We record this in the following corollary.
    
    
    \begin{corollary}[Minimum gain from closed-loop openness]\label{minimum gain from closed-loop openness}
        If, in addition, $\Omega_{F_u}(r)\geq \gamma(r)$ for all sufficiently small $r>0$, then every gain bound $d$ satisfied by $u$ must obey
        \begin{equation}\label{minimum gain inequality}
            \gamma(r) \leq \Omega_f\left(\sqrt{r^2+d(r)^2}\right)
        \end{equation}
        for all sufficiently small $r>0$.
    \end{corollary}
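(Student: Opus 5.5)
The plan is to chain the hypothesis $\Omega_{F_u}(r)\geq\gamma(r)$ with the upper bound from Theorem~\ref{graph limited brocket inequality}. The setting is the one of that theorem: $f$ is continuous with $f(0,0)=0$, $u(0)=0$, $F_u(x)=f(x,u(x))$, and $d$ is a nondecreasing gain bound with $\|u(x)\|\leq d(\|x\|)$ near $0$.

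\textbf{Step 1.} Apply Theorem~\ref{graph limited brocket inequality} to the given $d$. This produces some $r_1>0$ such that
\[
\Omega_{F_u}(r)\leq \Omega_f\left(\sqrt{r^2+d(r)^2}\right)\quad\text{for all }0<r<r_1.
\]
Concretely, $r_1$ can be taken to be any radius so small that $\|u(x)\|\leq d(\|x\|)$ holds on $\mathbb{B}_{r_1}(0)$, that $u$ is defined there, and that $r_1<\epsilon$, so that $d(r)$ makes sense.

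\textbf{Step 2.} By hypothesis there is an $r_2>0$ with $\gamma(r)\leq\Omega_{F_u}(r)$ for all $0<r<r_2$.

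\textbf{Step 3.} For $0<r<\min\{r_1,r_2\}$ the two inequalities combine to give
\[
\gamma(r)\leq\Omega_{F_u}(r)\leq\Omega_f\left(\sqrt{r^2+d(r)^2}\right),
\]
which is exactly \eqref{minimum gain inequality}.

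\textbf{Remarks on the details.} Since $d$ was an arbitrary admissible gain bound satisfied by $u$, the conclusion holds for every such $d$. The values $\Omega$ may equal $+\infty$ or $0$, but the inequalities remain meaningful in the extended reals, so no issue arises there.

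There is no real obstacle in this argument. The only point needing care is bookkeeping: the two phrases ``sufficiently small'' in Theorem~\ref{graph limited brocket inequality} and in the hypothesis refer to possibly different thresholds, and both are handled by taking the minimum radius.
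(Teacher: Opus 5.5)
Your proposal is correct and follows the paper's proof exactly: the paper proves this corollary in one line by chaining the hypothesis $\gamma(r)\leq\Omega_{F_u}(r)$ with Theorem~\ref{graph limited brocket inequality}. The only addition in your version is taking the minimum of the two ``sufficiently small'' thresholds, which the paper leaves implicit.
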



    \begin{proof}
        This follows immediately from Theorem~\ref{graph limited brocket inequality} and the assumed lower bound $\gamma(r)\leq \Omega_{F_u}(r)$.
    \end{proof}


    In short, Brockett’s theorem requires the closed-loop vector field to be open. If the closed-loop vector field has a minimum prescribed openness rate, then the system vector field must be able to realize that same openness rate along the graph of a feedback satisfying the gain bound. This forces a lower bound on the size of the satisfiable growth rate bounds $d$.

    
    \begin{remark}
        It is tempting to interpret the lower bound $\gamma$ in Corollary~\ref{minimum gain from closed-loop openness} as arising from a lower constraint on the gain of a stabilizing feedback, analogous to \eqref{question rate}; that is, from a condition of the form $\ell(x)\leq \|u(x)\|$. Such a constraint could, for instance, model limited actuator precision. Unfortunately, a lower bound of this type does not by itself imply any corresponding lower bound on the closed-loop openness profile. For example, for $f(x,u)=x+u$ and $u_p(x)=-x-x^p$, $p\geq 3$ odd, we have
        \[
        \frac{3}{4}|x|\leq |u_p(x)|\leq \frac{5}{4}|x|
        \]
        near $0$, uniformly in $p$, while the closed-loop field is $F_{u_p}(x)=-x^p$, whose openness profile is $\Omega_{F_{u_p}}(r)=r^p$. That is, there are stabilizable systems with arbitrarily weak closed-loop openness despite uniform lower bounds on the feedback magnitude.  For this reason the present paper treats lower closed-loop openness as an explicit performance hypothesis, as discussed in the next section.
    \end{remark}


\section{Openness Profile Calculus}


    In general, the lower profile $\gamma$ in Corollary~\ref{minimum gain from closed-loop openness} should not be interpreted as additional open-loop data.  Rather, it represents the desired strength of the closed-loop dynamics. The main theorem (i.e. Theorem~\ref{graph limited brocket inequality}) bounds the closed-loop Brockett profile $\Omega_{F_u}$ attainable by any feedback satisfying the gain constraint. Consequently, whenever a design objective imposes a lower bound $\Omega_{F_u}(r)\geq \gamma(r)$, the gain constraint must be large enough to satisfy the resulting inequality.
    
    In this section, we will elucidate some of the calculus of openness profiles, recording their corresponding consequences on the admissible gains of stabilizing controls.  For example, if the desired closed-loop vector field is $C^1$ with a Hurwitz linearization at the equilibrium, then $\Omega_{F_u}(r)\geq cr$ for some $c>0$. Thus the case $\gamma(r)=cr$ corresponds to the usual smooth exponential-stability regime.


    \begin{lemma}\label{linear openness profile}
        If $F:\mathbb{R}^{k}\to\mathbb{R}^{n}$ is $C^1$, $F(0)=0$, and $DF(0)$ has full row rank, then there exist constants $c,C>0$ such that
        $$
            cr\leq \Omega_F(r)\leq Cr
        $$
        for all sufficiently small $r>0$.
    \end{lemma}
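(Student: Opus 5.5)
The plan is to prove the two bounds separately. The upper bound comes from the Lipschitz property of $F$ near the origin. The lower bound comes from a quantitative surjective-mapping argument: a right inverse of the derivative, combined with a Newton-type iteration or a Brouwer fixed-point argument. Throughout, $\Omega_F(r)$ denotes $\Gamma_0\big(F(\mathbb{B}_r(0))\big)$ as in Theorem~\ref{open cont open equiv} with $x^*=0$.

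\emph{Upper bound.} Since $F$ is $C^1$, fix $r_0>0$ and set $L:=\sup_{\|x\|\le r_0}\|DF(x)\|<\infty$. By the mean value inequality, $\|F(x)\|\le L\|x\|<Lr$ for all $x\in\mathbb{B}_r(0)$ with $r\le r_0$. Hence $F(\mathbb{B}_r(0))\subseteq \mathbb{B}_{Lr}(0)$. Any ball $\mathbb{B}_\rho(0)$ contained in the image therefore has $\rho\le Lr$, and so $\Omega_F(r)\le Lr$. Take $C:=L$; if $L=0$, any $C>0$ works. Full row rank forces $DF(0)\neq 0$, so in fact $L>0$.

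\emph{Lower bound.} Let $A:=DF(0)$, which is surjective onto $\mathbb{R}^n$, and let $A^+:=A^T(AA^T)^{-1}$ be its right inverse, with $M:=\|A^+\|$. Write $F(x)=Ax+\varphi(x)$, where $\varphi(0)=0$ and $D\varphi(0)=0$. By continuity of $DF$, choose $r_1>0$ such that $\|D\varphi(x)\|\le \frac{1}{2M}$ for $\|x\|\le r_1$; the mean value inequality then makes $\varphi$ $\frac{1}{2M}$-Lipschitz on $\mathbb{B}_{r_1}(0)$. Given a target $y$ with $\|y\|<\frac{r}{2M}$ and $r\le r_1$, I will solve $F(x)=y$ by seeking $x=A^+z$ with $z\in\mathbb{R}^n$. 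This means solving $z=y-\varphi(A^+z)=:T(z)$.

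On the closed ball $\|z\|\le \frac{r}{M}\cdot\theta$, with $\theta<1$ chosen so that $\|A^+z\|<r$, the map $T$ is a contraction with constant $M\cdot\frac{1}{2M}=\frac12$. It also satisfies
\[
\|T(z)\|\le\|y\|+\tfrac{1}{2M}\,M\|z\|,
\]
which stays inside the ball provided $\|y\|\le \frac{\theta r}{2M}$. Banach's fixed point theorem then yields $z$, and $x=A^+z$ lies in $\mathbb{B}_r(0)$ and satisfies $F(x)=y$. Consequently $\mathbb{B}_{\theta r/(2M)}(0)\subseteq F(\mathbb{B}_r(0))$, which gives $\Omega_F(r)\ge cr$ with $c=\theta/(2M)$ for all $r\le r_1$.

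The main obstacle is this lower bound. The contraction must be set up on the correct ball: the image $A^+(\text{ball})$ has to stay inside the region where the Lipschitz estimate on $\varphi$ holds and inside the open ball $\mathbb{B}_r(0)$. This is why I use the slack factor $\theta$, say $\theta=\tfrac12$, and require the fixed-point domain to be closed so that completeness applies. The bookkeeping with strict versus non-strict inclusions is routine once $\theta$ is fixed.
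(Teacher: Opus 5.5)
Your proof is correct. For the lower bound it takes a genuinely different route from the paper's; the upper bound is the same in both, a local Lipschitz estimate from $C^1$ regularity.

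For the lower bound, the paper invokes the constant rank theorem. Near $0$, after $C^1$ changes of coordinates, $F$ becomes the projection $(y,w)\mapsto y$. Since these coordinate changes are locally bi-Lipschitz, they distort balls by at most fixed factors, which yields $\mathbb{B}_{cr}(0)\subseteq F(\mathbb{B}_r(0))$.

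You instead prove the quantitative surjectivity directly. You run a contraction on the range of the right inverse $A^+=A^T(AA^T)^{-1}$, which is essentially a Lyusternik--Graves or simplified-Newton argument. Your bookkeeping checks out:
\begin{itemize}
\item the contraction constant is $M\cdot\frac{1}{2M}=\frac12$;
\item the self-map estimate correctly uses $\varphi(0)=0$;
\item the slack $\theta<1$ keeps $x=A^+z$ inside the open ball $\mathbb{B}_r(0)$ and inside the region where the Lipschitz bound on $\varphi$ holds.
\end{itemize}

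The paper's version is shorter but less explicit. Its constant $c$ is buried in the Lipschitz constants of unspecified charts, and the step from ``bi-Lipschitz'' to the ball inclusion is left to the reader. Your version is self-contained and gives explicit constants: $c=\theta/(2\|A^+\|)=\theta\,\sigma_{\min}(A)/2$, valid for $r\leq r_1$, where $r_1$ is fixed by the modulus of continuity of $DF$ at $0$.

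Your argument also sharpens with no extra work. Replace $\frac{1}{2M}$ by $\frac{\varepsilon}{M}$ and let $\theta\uparrow 1$. This gives $\liminf_{r\to 0}\Omega_F(r)/r\geq \sigma_{\min}(DF(0))=\Gamma(DF(0))$, which ties the lemma directly to the Banach constant of Example~\ref{bconstant example}.
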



    \begin{proof}
        The upper bound follows from differentiability.  Indeed, after shrinking to a sufficiently small neighborhood of $0$, there is a constant $C>0$ such that $\|F(z)\|\leq C\|z\|$ (since $C^1$ mappings are locally Lipschitz).  Hence $F(\mathbb{B}_r(0))\subseteq \mathbb{B}_{Cr}(0)$, and therefore $\Omega_F(r)\leq Cr$.

        For the lower bound, since $DF(0)$ has full row rank, $F$ is a submersion at $0$.  By the constant rank theorem, after restricting to sufficiently small neighborhoods and making $C^1$ changes of coordinates in the domain and codomain, $F$ is locally equivalent to the projection $(y,w)\mapsto y$.  These coordinate changes are locally bi-Lipschitz.  Therefore there is a constant $c>0$ such that $\mathbb{B}_{cr}(0)\subseteq F(\mathbb{B}_r(0))$ for all sufficiently small $r>0$.  Thus $\Omega_F(r)\geq cr$.
    \end{proof}
    


    \begin{corollary}\label{hurwitz gives linear profile}
        If $F:\mathbb{R}^{n}\to\mathbb{R}^{n}$ is $C^1$, $F(0)=0$, and $DF(0)$ is invertible, then $\Omega_F(r)\geq cr$ for some $c>0$ and all sufficiently small $r>0$.  In particular, if $DF(0)$ is Hurwitz, then the usual $C^1$ exponentially stable closed-loop case has a linear lower openness profile.
    \end{corollary}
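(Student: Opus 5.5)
This is a direct specialization of Lemma~\ref{linear openness profile} to the square case $k=n$. If $DF(0)$ is an invertible $n\times n$ matrix, then it has rank $n$. That equals the number of its rows, so $DF(0)$ has full row rank. Lemma~\ref{linear openness profile} then applies verbatim to $F:\mathbb{R}^n\to\mathbb{R}^n$. It yields constants $c,C>0$ with $cr\leq \Omega_F(r)\leq Cr$ for all sufficiently small $r>0$, and we keep only the lower inequality. Here $\Omega_F(r)$ is read as $\Gamma_{0}(F(\mathbb{B}_r(0)))$, as in Theorem~\ref{open cont open equiv} with $x^*=0$.

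For the ``in particular'' clause, I would observe that a Hurwitz matrix has all eigenvalues with strictly negative real part. Hence $0$ is not an eigenvalue, and $DF(0)$ is invertible. In the usual $C^1$ exponentially stable closed-loop setting, $F=F_u$ is $C^1$ with $F_u(0)=0$ and Hurwitz linearization. The first part therefore gives $\Omega_{F_u}(r)\geq cr$. This is exactly the linear lower profile $\gamma(r)=cr$ that is fed into Corollary~\ref{minimum gain from closed-loop openness}.

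There is essentially no obstacle here; all the analytic content is in Lemma~\ref{linear openness profile}. If one wanted to bypass the constant rank theorem used there, the only substantive step would be the lower bound. One could instead invoke the inverse function theorem directly: $F$ restricts to a $C^1$ diffeomorphism from a neighborhood $U$ of $0$ onto a neighborhood of $0$, and its inverse is Lipschitz with some constant $L$ near $0$. Then for $\|y\|<r/L$ with $r$ small, the preimage $F^{-1}(y)$ lies in $\mathbb{B}_r(0)$. This gives $\mathbb{B}_{r/L}(0)\subseteq F(\mathbb{B}_r(0))$, so we may take $c=1/L$.
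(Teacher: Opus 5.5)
Your proposal is correct and is essentially the paper's own argument: an invertible $DF(0)$ is the square full-row-rank case of Lemma~\ref{linear openness profile}, and a Hurwitz matrix has no zero eigenvalue, so it is invertible. Your optional inverse-function-theorem route, which takes $c=1/L$ with $L$ a local Lipschitz constant of $F^{-1}$ near $0$, is also valid and gives a more self-contained proof of the lower bound without the constant rank theorem.
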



    \begin{proof}
        This is the square full-rank case of Lemma~\ref{linear openness profile}.  If $DF(0)$ is Hurwitz, then it is in particular invertible.
    \end{proof}


    A higher-order variant of Lemma~\ref{linear openness profile} follows similarly.
    
    \begin{lemma}\label{flat upper profile}
        If $\|f(x,u)\|\leq C\|(x,u)\|^q$ near the origin, then $\Omega_f(r)\leq Cr^q$ for all sufficiently small $r>0$.
    \end{lemma}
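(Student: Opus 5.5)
The plan is to repeat the upper-bound half of the proof of Lemma~\ref{linear openness profile}, with the linear growth estimate replaced by the assumed power growth. First fix a radius $r_0>0$ such that $\|f(x,u)\|\leq C\|(x,u)\|^q$ holds for every $(x,u)\in\mathbb{B}_{r_0}(0,0)$. This is what ``near the origin'' means here. We also assume $q>0$ (presumably $q\geq 1$ in the intended applications), so that $t\mapsto t^q$ is nondecreasing on $[0,\infty)$.

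Next, for $0<r\leq r_0$ and any $(x,u)\in\mathbb{B}_r(0,0)$, we get $\|f(x,u)\|\leq C\|(x,u)\|^q\leq Cr^q$, using the monotonicity of $t\mapsto t^q$. Hence
\[
f\big(\mathbb{B}_r(0,0)\big)\subseteq \overline{\mathbb{B}}_{Cr^q}(0).
\]
If the balls in Definition~\ref{openness profile definition} are open, this inclusion even holds with the open ball $\mathbb{B}_{Cr^q}(0)$, but the closed ball is enough.

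Finally, we convert this inclusion into a bound on the inradius. Suppose $\rho>0$ satisfies $\mathbb{B}_\rho(0)\subseteq f(\mathbb{B}_r(0,0))$. Then $\mathbb{B}_\rho(0)\subseteq\overline{\mathbb{B}}_{Cr^q}(0)$. In $\mathbb{R}^n$ with $n\geq 1$, this forces $\rho\leq Cr^q$: otherwise the point $\tfrac{1}{2}(\rho+Cr^q)e_1$ would lie in the first ball but not in the second. Taking the supremum over all admissible $\rho$ gives $\Omega_f(r)\leq Cr^q$. If no $\rho>0$ is admissible, then $\Omega_f(r)=0$ and the bound holds trivially.

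There is no genuine obstacle here. The only points needing care are these: the estimate must hold on a full ball $\mathbb{B}_{r_0}(0,0)$ before we restrict to $r\leq r_0$; we need $q>0$ for the monotonicity step; and we must handle the open-versus-closed ball issue when passing from a set inclusion to a comparison of radii.
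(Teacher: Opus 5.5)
Your proposal is correct and follows the same route as the paper: the growth bound places $f(\mathbb{B}_r(0,0))$ inside a ball of radius $Cr^q$, so no larger centered ball fits in the image and $\Omega_f(r)\leq Cr^q$. Your extra care with $q>0$, with the estimate holding on a full neighborhood $\mathbb{B}_{r_0}(0,0)$, and with the open-versus-closed ball comparison makes explicit what the paper's two-line proof leaves implicit.
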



    \begin{proof}
        For sufficiently small $r>0$, the assumption implies $f(\mathbb{B}_r(0,0))\subseteq \mathbb{B}_{Cr^q}(0)$.  Hence no ball centered at $0$ of radius larger than $Cr^q$ can be contained in $f(\mathbb{B}_r(0,0))$, so $\Omega_f(r)\leq Cr^q$.
    \end{proof}

    \begin{example}\label{signed polynomial example}
        Suppose $f(x,u)=\operatorname{sgn}(x)|x|^q+\operatorname{sgn}(u)|u|^q$ with $q>1$ and suppose we seek a closed-loop vector field whose openness profile satisfies $\Omega_{F_u}(r) \geq r$; this is the natural profile of a linear-rate closed loop such as $F_u(x)=-x$.  Since $|f(x,u)|\leq |x|^q+|u|^q\leq 2\|(x,u)\|^q$, Lemma~\ref{flat upper profile} gives $\Omega_f(r)\leq 2r^q$.  Corollary~\ref{minimum gain from closed-loop openness} then yields
        \[
            r \leq 2\left(r^2+d(r)^2\right)^{q/2}.
        \]
        Therefore
        \[
            \sqrt{\left(\frac{r}{2}\right)^{2/q}-r^2}\leq d(r).
        \]
        For small $r$ and $q>1$, the term $\left(\frac{r}{2}\right)^{2/q}$ dominates $r^2$, so this forces $d(r) \gtrsim r^{1/q}$.  That is, a system with an openness profile that grows on the order of $r^q$ can only be stabilized to a closed-loop system with a linear openness profile by controls $u$ satisfying $\|u(x)\| \gtrsim \|x\|^{1/q}$.
    \end{example}
    
    
    The above example suggests the following corollary, which we now record.


    \begin{corollary}\label{power law}
        Suppose that $\Omega_f(r)\leq Cr^q$ for all sufficiently small $ r >0 $.  If a feedback $u$ satisfying \eqref{question rate} produces a closed-loop field satisfying $\Omega_{F_u}(r)\geq cr^p$ near $0$, then every gain bound $d$ satisfied by $u$ must obey
        \begin{equation}\label{power law inequality}
            d(r)\geq \sqrt{\left(\frac{c}{C}\right)^{2/q}r^{2p/q}-r^2}
        \end{equation}
        for all sufficiently small $r>0$ for which the expression is real. In particular, $d(r)\gtrsim r^{p/q}$ whenever $p<q$. Consequently, if $d(r)=O(r^\beta)$, then necessarily $\beta\leq \frac{p}{q}$.  Moreover, this bound is sharp.
    \end{corollary}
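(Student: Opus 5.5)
The plan is to run Corollary~\ref{minimum gain from closed-loop openness} with $\gamma(r)=cr^p$ and then invert the monotone power bound. By that corollary, together with the hypothesis $\Omega_f(s)\leq Cs^q$ at $s=\sqrt{r^2+d(r)^2}$, we get for all sufficiently small $r>0$
\[
cr^p\leq \Omega_f\left(\sqrt{r^2+d(r)^2}\right)\leq C\left(r^2+d(r)^2\right)^{q/2}.
\]
This step needs $\sqrt{r^2+d(r)^2}$ to lie in the range where the bound on $\Omega_f$ holds, which is fine since $d(r)\to0$; if $d$ is only assumed nondecreasing, we restrict to $r$ with $d(r)$ small. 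Raising both sides to the power $2/q>0$ gives $(c/C)^{2/q}r^{2p/q}\leq r^2+d(r)^2$. Subtracting $r^2$ and taking square roots, whenever the left side minus $r^2$ is nonnegative, yields \eqref{power law inequality}.

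For the asymptotic statement with $p<q$, we have $2p/q<2$, so $r^2=o(r^{2p/q})$. Hence for small $r$ the radicand is at least $\tfrac12 (c/C)^{2/q}r^{2p/q}$, and $d(r)\geq \kappa r^{p/q}$ with $\kappa=2^{-1/2}(c/C)^{1/q}$. If $d(r)\leq Kr^\beta$ near $0$, then $\kappa r^{p/q}\leq Kr^\beta$ for all small $r$. Letting $r\to0$ forces $\beta\leq p/q$, since otherwise $r^{\beta-p/q}\to0$.

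Sharpness is the only part needing a construction, and I expect it to be the main obstacle, mainly because we must decide what ``sharp'' means. I would read it as: there exist $f$ with $\Omega_f(r)\leq Cr^q$ and feedbacks with $\|u(x)\|\asymp\|x\|^{p/q}$ achieving $\Omega_{F_u}(r)\geq cr^p$. Following Example~\ref{signed polynomial example}, take the scalar system $f(x,u)=\operatorname{sgn}(x)|x|^q+\operatorname{sgn}(u)|u|^q$, whose profile satisfies $\Omega_f(r)\leq 2r^q$. Take the feedback $u(x)=-\operatorname{sgn}(x)\,(|x|^q+|x|^p)^{1/q}$, which is continuous with $u(0)=0$. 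Then $F_u(x)=-\operatorname{sgn}(x)|x|^p$, which is asymptotically stable. Since this map is odd and monotone, $F_u([-r,r])=[-r^p,r^p]$, so $\Omega_{F_u}(r)=r^p$. Finally, $|u(x)|\leq (2|x|^p)^{1/q}$ for $|x|\leq1$ when $p<q$, so $d(r)=2^{1/q}r^{p/q}$ is an admissible bound. This realizes $\beta=p/q$, so the exponent cannot be improved.

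The one check left is that this feedback is genuinely continuous and stabilizing, which is immediate for the scalar odd field $-\operatorname{sgn}(x)|x|^p$.
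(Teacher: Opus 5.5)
Your proposal is correct, and the main argument is the same as the paper's. You apply Corollary~\ref{minimum gain from closed-loop openness} with $\gamma(r)=cr^p$, insert $\Omega_f(s)\le Cs^q$, and rearrange; the domination argument for $p<q$ and the deduction $\beta\le p/q$ also match.

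The one genuine difference is the sharpness witness. The paper uses the drift-free system $f(x,u)=\operatorname{sgn}(u)|u|^q$ with $u(x)=-\operatorname{sgn}(x)|x|^{p/q}$. There the profiles are exact, $\Omega_f(r)=r^q$ and $\Omega_{F_u}(r)=r^p$, the gain is exactly $|x|^{p/q}$, and the construction works for every $p>0$. You instead keep the drift term of Example~\ref{signed polynomial example} and cancel it with $u(x)=-\operatorname{sgn}(x)(|x|^q+|x|^p)^{1/q}$. This gives only $\Omega_f(r)\le 2r^q$ and $d(r)=2^{1/q}r^{p/q}$, and it needs $p<q$. In exchange, it shows that the bound $d(r)\gtrsim r^{1/q}$ derived in that example is attained in that very system. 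It also shows the exponent is not an artifact of a drift-free field. Your computation is right: $F_u(x)=-\operatorname{sgn}(x)|x|^p$, and $|x|^q\le|x|^p$ for $|x|\le1$.

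One small imprecision is your parenthetical about $d$ being merely nondecreasing. If $d(r)\not\to0$, there may be no small $r$ with $d(r)$ small. In that case, though, $d$ is bounded below by a positive constant while the right side of \eqref{power law inequality} tends to $0$, so the inequality holds trivially. In any event, \eqref{question rate} already comes with $d(r)\to0$.
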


    
    \begin{proof}
        By Corollary~\ref{minimum gain from closed-loop openness} with $\gamma(r)=cr^p$,
        \[
        cr^p \leq C\left(r^2+d(r)^2\right)^{q/2}.
        \]
        So, $\sqrt{r^2+d(r)^2} \geq \left(\frac{c}{C}\right)^{1/q}r^{p/q}$ and the \eqref{power law inequality} follows from basic algebra.  If $p<q$, then the first term inside the square root in \eqref{power law inequality} dominates $r^2$ as $r \to 0$, so $d(r)\gtrsim r^{p/q}$.  Therefore $d(r)=O(r^\beta)$ is possible only if $\beta\leq \frac{p}{q}$.

        For sharpness, let the vector field $f$ in \eqref{control sys} be $f(x,u)=\operatorname{sgn}(u)|u|^q$.  Then $\Omega_f(r)=r^q$ and, for any $p>0$, the feedback $u(x)=-\operatorname{sgn}(x)|x|^{p/q}$ produces $F_u(x)=-\operatorname{sgn}(x)|x|^p$, so that $\Omega_{F_u}(r)=r^p$.   Thus the lower bound $d(r)\gtrsim r^{p/q}$ is attained, up to constants, by the feedback bound $|u(x)|\leq |x|^{p/q}$.
    \end{proof}
    

    A natural source of power-law openness profiles is homogeneity.
    

    \begin{lemma}\label{homogeneous exact profile}
        Let $H:\mathbb{R}^{n+m}\to\mathbb{R}^n$ be a continuous mapping that is homogeneous of degree $q>0$, and open at the origin. Then
        $$
            \Omega_H(r)=r^q\Omega_H(1)
        $$
        for every $r>0$ for which the profile is defined.
    \end{lemma}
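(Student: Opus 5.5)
The plan is a scaling argument. Homogeneity of degree $q$ means $H(tz)=t^qH(z)$ for all $t>0$ and all $z\in\mathbb{R}^{n+m}$. Fix $r>0$ and write $\mathbb{B}_r(0,0)=r\,\mathbb{B}_1(0,0)$. Then
\[
H\big(\mathbb{B}_r(0,0)\big)=\{H(rz):\|z\|<1\}=\{r^qH(z):\|z\|<1\}=r^q\,H\big(\mathbb{B}_1(0,0)\big).
\]
So the image of the $r$-ball is exactly the dilate by $r^q$ of the image of the unit ball. The rest of the argument only needs to transport this set identity to inradii.

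The second step is the elementary scaling of the inradius in Definition~\ref{b constant def}. For $\lambda>0$ and any $A\subseteq\mathbb{R}^n$ we have $\Gamma_0(\lambda A)=\lambda\,\Gamma_0(A)$. This holds because $\mathbb{B}_\rho(0)\subseteq A$ if and only if $\mathbb{B}_{\lambda\rho}(0)=\lambda\mathbb{B}_\rho(0)\subseteq\lambda A$, since dilation by $\lambda$ is a bijection of $\mathbb{R}^n$. Consequently the map $\rho\mapsto\lambda\rho$ is a bijection between the two admissible sets of radii in the supremum. Taking suprema, and using $H(0,0)=0$ (which follows from homogeneity with $t\to0$ and continuity), we get
\[
\Omega_H(r)=\Gamma_0\big(r^qH(\mathbb{B}_1(0,0))\big)=r^q\,\Gamma_0\big(H(\mathbb{B}_1(0,0))\big)=r^q\,\Omega_H(1).
\]

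The openness hypothesis, via Theorem~\ref{open cont open equiv} or the preceding lemma, is used only to ensure $\Omega_H(1)>0$, so that the identity is a nondegenerate power law rather than $0=0. It is not actually needed for the equality itself. Indeed, the identity shows that positivity at one radius propagates to all radii, which matches the statement's "for every $r>0$."

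I expect no serious obstacle. The only care points are these: the balls are open, and dilation maps open balls to open balls, so no closure issues arise; the supremum may be $+\infty$ if $H$ is onto with a huge image, where the identity still holds in the extended sense, which is the caveat "for which the profile is defined"; and we need $r>0$ so that $\lambda=r^q>0$.
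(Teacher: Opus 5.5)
Your proof is correct and follows the paper's own argument: the set identity $H(\mathbb{B}_r(0))=r^qH(\mathbb{B}_1(0))$, then the observation that $\mathbb{B}_\rho(0)\subseteq H(\mathbb{B}_r(0))$ if and only if $\mathbb{B}_{\rho/r^q}(0)\subseteq H(\mathbb{B}_1(0))$, then taking suprema. One small correction to an aside: the profile can never be $+\infty$ here, because $H$ is continuous and $H(\mathbb{B}_1(0))\subseteq H(\overline{\mathbb{B}}_1(0))$ is bounded, so your extended-sense caveat is unnecessary (you are right that openness is not needed for the identity itself).
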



    \begin{proof}
        Since $H$ is homogeneous mapping of degree $q$, we have $H(\mathbb{B}_r(0))=r^qH(\mathbb{B}_1(0))$.  Therefore $\mathbb{B}_\rho(0)\subseteq H(\mathbb{B}_r(0))$ if and only if $\mathbb{B}_{\rho/r^q}(0)\subseteq H(\mathbb{B}_1(0))$.  Taking suprema over such $\rho$ gives $\Omega_H(r)=r^q\Omega_H(1)$.
    \end{proof}


    \begin{corollary}\label{leading order upper profile}
        Suppose $\|f(x,u)\|\leq C\|(x,u)\|^q$ near the origin.  If a feedback $u$ satisfying \eqref{question rate} produces a closed-loop field satisfying $\Omega_{F_u}(r)\geq cr^p$ near $0$, then $d(r)\gtrsim r^{p/q}$ whenever $p<q$.  In particular, if the desired closed-loop vector field has a linear openness profile and $q>1$, then every admissible gain bound satisfies $d(r)\gtrsim r^{1/q}$.
    \end{corollary}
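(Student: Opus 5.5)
The plan is to chain two earlier results. First, Lemma~\ref{flat upper profile} turns the hypothesis $\|f(x,u)\|\leq C\|(x,u)\|^q$ near the origin into the open-loop bound $\Omega_f(r)\leq Cr^q$ for all sufficiently small $r>0$. Second, Corollary~\ref{power law} converts that bound, together with the assumed closed-loop bound $\Omega_{F_u}(r)\geq cr^p$ near $0$, into a lower bound on the gain. No new estimate is needed; the work is to check that the hypotheses line up and to make the asymptotic step explicit.

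Concretely, Corollary~\ref{minimum gain from closed-loop openness} with $\gamma(r)=cr^p$, or equivalently Theorem~\ref{graph limited brocket inequality} applied directly, gives
\[
cr^p\leq \Omega_{F_u}(r)\leq \Omega_f\left(\sqrt{r^2+d(r)^2}\right)\leq C\left(r^2+d(r)^2\right)^{q/2}.
\]
The last inequality uses Lemma~\ref{flat upper profile} at radius $\sqrt{r^2+d(r)^2}$. That radius is small for small $r$ as long as $d(r)\to0$, which the standing assumption on $d$ in Question~\ref{question} supplies.

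Rearranging gives
\[
r^2+d(r)^2\geq (c/C)^{2/q}r^{2p/q}.
\]
When $p<q$ we have $2p/q<2$, so $r^2=o(r^{2p/q})$ as $r\to0$. Hence for small $r$ the term $r^2$ is at most $\tfrac12(c/C)^{2/q}r^{2p/q}$, and therefore
\[
d(r)\geq \tfrac{1}{\sqrt2}(c/C)^{1/q}r^{p/q},
\]
which is $d(r)\gtrsim r^{p/q}$.

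The "in particular" clause is the case $p=1$. A linear closed-loop openness profile means $\Omega_{F_u}(r)\geq cr$, and the condition $p<q$ becomes $q>1$, so $d(r)\gtrsim r^{1/q}$.

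The only step needing care is the smallness of the argument $\sqrt{r^2+d(r)^2}$, so that the local bound on $f$ applies. If one does not want to rely on $d(r)\to0$, there is a second case. If $\sqrt{r^2+d(r)^2}$ is not small, then $d(r)$ is bounded below by a positive constant, and a positive constant trivially dominates $r^{p/q}$ for small $r$. So the conclusion holds in either case.
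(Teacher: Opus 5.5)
Your proposal is correct and follows the paper's argument exactly: Lemma~\ref{flat upper profile} turns the growth bound into $\Omega_f(r)\leq Cr^q$. Corollary~\ref{power law} (equivalently, Theorem~\ref{graph limited brocket inequality} followed by the rearrangement you carry out) then gives $d(r)\gtrsim r^{p/q}$ for $p<q$, and $p=1$, $q>1$ gives the final clause. Your check that $\sqrt{r^2+d(r)^2}$ stays in the range where the local bound on $f$ applies is extra care that the paper skips.
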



    \begin{proof}
        The hypothesis $\|f(x,u)\|\leq C\|(x,u)\|^q$ implies $\Omega_f(r)\leq Cr^q$ by Lemma~\ref{flat upper profile}.  The result then follows from Corollary~\ref{power law}.
    \end{proof}


    Consequently, Brockett-type data gives explicit exponent restrictions for smooth exponential stabilization.


    \begin{theorem}\label{smooth exponential gain obstruction}
        Suppose $\|f(x,u)\|\leq C\|(x,u)\|^q$ near the origin for some $q>1$.  If a feedback $u$ satisfying \eqref{question rate} produces a closed-loop field $F_u$ that is $C^1$ near $0$ with $DF_u(0)$ invertible, then $d(r)\gtrsim r^{1/q}$.  In particular, this conclusion applies whenever $F_u$ is $C^1$ and has a Hurwitz linearization at the origin.
    \end{theorem}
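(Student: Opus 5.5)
The plan is to chain the earlier results: first get a linear lower openness profile for the closed loop, then feed it into Corollary~\ref{leading order upper profile} with $p=1$.

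\textbf{Step 1: linear lower profile for $F_u$.} Since $F_u$ is $C^1$ near $0$ with $DF_u(0)$ invertible, we need $F_u(0)=0$. This holds because $u(0)=0$ and $f(0,0)=0$. To apply Corollary~\ref{hurwitz gives linear profile}, which is stated for maps defined on all of $\mathbb{R}^n$, note that $\Omega_{F_u}(r)$ only involves $F_u$ on $\mathbb{B}_r(0)$. So we may restrict to a small ball where $F_u$ is $C^1$, or extend $F_u$ arbitrarily outside a neighborhood; small-$r$ openness is unaffected. The corollary then gives $c>0$ with $\Omega_{F_u}(r)\geq cr$ for all sufficiently small $r>0$.

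\textbf{Step 2: power-law upper profile for $f$.} The hypothesis $\|f(x,u)\|\leq C\|(x,u)\|^q$ gives $\Omega_f(r)\leq Cr^q$ by Lemma~\ref{flat upper profile}.

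\textbf{Step 3: the exponent bound.} Apply Corollary~\ref{leading order upper profile}, equivalently Corollary~\ref{power law}, with $p=1<q$. Inequality \eqref{power law inequality} gives
\[
d(r)\geq \sqrt{\left(\tfrac{c}{C}\right)^{2/q}r^{2/q}-r^2}.
\]
Because $2/q<2$, the first term dominates for small $r$. For instance, once $r^{2-2/q}\leq \tfrac12 (c/C)^{2/q}$, we get $d(r)\geq \tfrac{1}{\sqrt2}(c/C)^{1/q}r^{1/q}$. Hence $d(r)\gtrsim r^{1/q}$.

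\textbf{Step 4: the Hurwitz case.} A Hurwitz matrix has no zero eigenvalue, so it is invertible, and Steps 1--3 apply unchanged.

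The only mildly delicate point is Step 1. One must check that applying Corollary~\ref{hurwitz gives linear profile}, via the constant rank argument of Lemma~\ref{linear openness profile}, only needs $F_u$ to be $C^1$ on a neighborhood of $0$ rather than globally. This is true because both the inverse-function-theorem argument and the definition of $\Omega_{F_u}(r)$ for small $r$ are purely local. Everything else is direct substitution into previously established inequalities.
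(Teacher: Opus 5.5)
Your proposal is correct and follows the paper's proof essentially step for step. Corollary~\ref{hurwitz gives linear profile} gives $\Omega_{F_u}(r)\geq cr$; then Corollary~\ref{leading order upper profile}, via Lemma~\ref{flat upper profile} and Corollary~\ref{power law} with $p=1$, gives $d(r)\gtrsim r^{1/q}$; and Hurwitz implies invertible. Your extra remarks make explicit what the paper leaves implicit: that $F_u(0)=0$, that the $C^1$ hypothesis is only needed locally, and the explicit constant $\tfrac{1}{\sqrt2}(c/C)^{1/q}$.
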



    \begin{proof}
        By Corollary~\ref{hurwitz gives linear profile}, the closed-loop field satisfies $\Omega_{F_u}(r)\geq cr$ for some $c>0$ and all sufficiently small $r>0$.  Applying Corollary~\ref{leading order upper profile} with $p=1$ gives $d(r)\gtrsim r^{1/q}$.  If $DF_u(0)$ is Hurwitz, then $DF_u(0)$ is invertible, so the same conclusion follows.
    \end{proof}


    \begin{remark}
        The condition $DF_u(0)$ invertible should not be interpreted as a hidden assumption on the feedback; it is a condition on the achieved closed-loop vector field.  The feedback itself may be nonsmooth or even discontinuous, provided that the composition $F_u(x)=f(x,u(x))$ is a classical closed-loop vector field satisfying the stated regularity and stability hypotheses.
    \end{remark}



\section{Norm Estimates and Inverse-Profile Constraints}


    The previous section used upper bounds on the openness profile of the system vector field to derive lower bounds on admissible feedback gains.  There is a complementary interpretation of the same phenomenon.  If a subset of the domain of $f$ is sufficiently large for its image to contain a ball of velocity values around $f(x^*)$, then the set itself must extend sufficiently far from $x^*$.  In the feedback setting, this means that achieving a prescribed closed-loop openness profile requires the graph $x\mapsto (x,u(x))$ to extend a corresponding distance in the joint state-control space.  Since the state component is already bounded by $r$, any remaining distance must be contributed by the control component.

    For this section, when the base point is not the origin, we write
    $$
        \Omega_{f,x^*}(r):=\Gamma_{f(x^*)}\big(f(\mathbb{B}_r(x^*))\big).
    $$
    When $x^*=0$ and $f(0)=0$, this coincides with the openness profile used above.  Also, write $\|\restr{f}{K}\|:=\sup_{x\in K}\|f(x)\|$.


    \begin{lemma}\label{norm est}
        Let \(X,Y\) be normed vector spaces, let \(x^*\in X\), and let
        \(f:X\to Y\). Suppose \(K\subset X\) is a neighborhood of \(x^*\). Then,
        \[
            \|\restr{f}{K}\|
            \geq
            \|f(x^*)\|+\Omega_{f,x^*}(\Gamma_{x^*}(K)).
        \]
    \end{lemma}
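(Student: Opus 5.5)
The plan is to compare the sup of $\|f\|$ on $K$ with the sup over a ball of velocities around $f(x^*)$ that is forced to lie in $f(K)$. I will write $s:=\Gamma_{x^*}(K)$ and $\omega:=\Omega_{f,x^*}(s)$, and split into cases. If $\omega=0$, the claim is just $\|\restr{f}{K}\|\geq\|f(x^*)\|$, which holds because $x^*\in K$. So assume $\omega>0$; in particular $s>0$, since $K$ is a neighborhood of $x^*$.

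First I show that, for every $0<s'<s$ and every $0<\rho<\Omega_{f,x^*}(s')$, we have $\mathbb{B}_\rho(f(x^*))\subseteq f(\mathbb{B}_{s'}(x^*))\subseteq f(K)$. The first inclusion holds because, in the definition of $\Gamma$, the admissible radii form a down-closed set: a ball of radius smaller than an admissible one is still contained. The second inclusion holds because $\mathbb{B}_{s'}(x^*)\subseteq K$ when $s'<s$, since the admissible radii in $\Gamma_{x^*}(K)$ are also down-closed.

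Next I build an explicit test vector in $f(K)$. If $f(x^*)\neq 0$, set $e:=f(x^*)/\|f(x^*)\|$; otherwise let $e$ be any unit vector in $Y$ (if $Y=\{0\}$ the statement is trivial). For $0<t<\rho$, the point $y_t:=f(x^*)+te$ lies in $\mathbb{B}_\rho(f(x^*))$, so $y_t=f(x)$ for some $x\in K$. Since $e$ points along $f(x^*)$, the norm adds exactly: $\|y_t\|=\|f(x^*)\|+t$. Hence $\|\restr{f}{K}\|\geq \|f(x^*)\|+t$. Letting $t\uparrow\rho$ and then $\rho\uparrow\Omega_{f,x^*}(s')$ gives
\[
\|\restr{f}{K}\|\geq\|f(x^*)\|+\Omega_{f,x^*}(s')\quad\text{for all } s'<s.
\]

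The main obstacle is passing from $s'<s$ to $s'=s$, because $\mathbb{B}_s(x^*)$ itself need not lie in $K$. The cleanest route is to interpret balls as open, so that $\mathbb{B}_s(x^*)=\bigcup_{s'<s}\mathbb{B}_{s'}(x^*)\subseteq K$. Then the first step applies directly with $s'=s$: any $\rho<\omega$ gives $\mathbb{B}_\rho(f(x^*))\subseteq f(\mathbb{B}_s(x^*))\subseteq f(K)$, and the previous paragraph yields the claim with $\omega$ in full. If closed balls were intended, I would instead need left-continuity of $\Omega_{f,x^*}$, which fails in general. In that case I would flag that the inequality holds with $\lim_{s'\uparrow s}\Omega_{f,x^*}(s')$, or rely on the open-ball convention the paper uses implicitly, as in Theorem~\ref{open cont open equiv}.
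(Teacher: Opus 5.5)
Your proof is correct and is essentially the paper's argument: handle the case $\Omega_{f,x^*}(R)=0$ (with $R=\Gamma_{x^*}(K)$) using $x^*\in K$, then push the test point $f(x^*)+\theta v$, with $v$ a unit vector along $f(x^*)$, through $\mathbb{B}_\eta(f(x^*))\subseteq f(\mathbb{B}_R(x^*))\subseteq f(K)$ and take limits; you also justify $\mathbb{B}_R(x^*)\subseteq K$ via the open-ball convention (consistent with the paper writing $\overline{\mathbb{B}}$ for closed balls), a step the paper asserts without comment. One small correction: for $Y=\{0\}$ the statement is not trivial but false, since every ball in $Y$ is $\{0\}$, so $\Omega_{f,x^*}\equiv+\infty$ while $f\equiv 0$; that degenerate case must be excluded, which the paper's choice of a unit vector in $Y$ tacitly does.
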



    \begin{proof}
        Put \(R:=\Gamma_{x^*}(K)\). If \(\Omega_{f,x^*}(R)=0\), then the conclusion follows from $x^*\in K$. So, assume $\Omega_{f,x^*}(R)>0$, and let $0<\eta<\Omega_{f,x^*}(R)$.  By the definition of \(\Omega_{f,x^*}(R)\),
        \[
            \mathbb B_{\eta}\big(f(x^*)\big)
            \subseteq
            f(\mathbb B_R(x^*)).
        \]
        Since \(\mathbb B_R(x^*)\subseteq K\), it follows that $\mathbb B_{\eta}\big(f(x^*)\big)\subseteq f(K)$.  Now, choose a unit vector \(v\) in the direction of \(f(x^*)\) when
        \(f(x^*)\neq 0\), and choose any unit vector \(v\in Y\) otherwise. For every
        \(0<\theta<\eta\), we have
        \[
            f(x^*)+\theta v\in \mathbb B_{\eta}\big(f(x^*)\big)\subseteq f(K).
        \]
        Hence, $\|\restr{f}{K}\| \geq \|f(x^*)+\theta v\| = \|f(x^*)\|+\theta$, and letting \(\theta\uparrow \eta\) gives $\|\restr{f}{K}\| \geq \|f(x^*)\|+\eta$.  Finally, letting \(\eta\uparrow \Omega_{f,x^*}(R)\) gives the result.
    \end{proof}


    In particular, the openness profile gives a direct lower estimate on the size of a map over any neighborhood of the point about which the profile is taken  For instance, if $f:\mathbb{R}\rightarrow \mathbb{R}$ is given by $f(x)=2x+3$, $x^*=1$, and $K=[0,2]$, then $\Gamma_{x^*}(K)=1$, $f(x^*)=5$, and $\Omega_{f,x^*}(1)=2$.  Lemma~\ref{norm est} gives $\|\restr{f}{K}\|\geq 5+2=7$, which is sharp.

    Similarly, if $f:\mathbb{R}^2\rightarrow \mathbb{R}^2$ is given by $f(x)=Ax+b$ with
    $$
        A=\begin{bmatrix}2&0\\0&1\end{bmatrix},\qquad b=\begin{bmatrix}3\\4\end{bmatrix},
    $$
    and $K=\overline{\mathbb{B}}_1(0)$, then $\Omega_{f,0}(1)=\sigma_{\min}(A)=1$ and $\|f(0)\|=5$.  Therefore Lemma~\ref{norm est} yields $\|\restr{f}{K}\|\geq 6$, though not sharply.


    The same idea may be viewed in reverse.  If the image of a set contains a ball of prescribed radius, then the set itself must extend at least as far as the corresponding generalized inverse of the openness profile.  It is this perspective that yields the connection to the gain-limited stabilization problem of Question~\ref{question}.


    \begin{corollary}\label{reverse norm cor}
        Let \(X,Y\) be normed spaces, let \(x^*\in X\), let \(f:X\to Y\) be a function, and let \(s\geq 0\). Define
        \[
            \Omega_{f,x^*}^{\leftarrow}(s)
            :=
            \inf\{r>0:\ \Omega_{f,x^*}(r)\geq s\},
        \]
        where \(\inf\emptyset:=\infty\). If \(S\subset X\) satisfies $\mathbb B_s(f(x^*))\subset f(S)$, then
        \[
            \sup_{x\in S}\|x-x^*\|
            \geq
            \Omega_{f,x^*}^{\leftarrow}(s).
        \]
    \end{corollary}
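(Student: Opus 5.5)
The plan is to argue directly from the definitions, approximating the radius of $S$ from above so that the closed-versus-open ball issue never arises. Put $R:=\sup_{x\in S}\|x-x^*\|$. If $R=\infty$ there is nothing to prove. We assume $S\neq\emptyset$, which is automatic when $s>0$: the ball $\mathbb B_s(f(x^*))$ is then nonempty and lies in $f(S)$. When $S$ is empty and $s=0$, the left side is the supremum of an empty set, so we adopt the usual convention or exclude this vacuous case. So take $0\le R<\infty$.

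The key step is the following. For every $R'>R$, every $x\in S$ satisfies $\|x-x^*\|\le R<R'$. Hence $S\subseteq\mathbb B_{R'}(x^*)$, the open ball, and therefore
\[
\mathbb B_s\big(f(x^*)\big)\subseteq f(S)\subseteq f\big(\mathbb B_{R'}(x^*)\big).
\]
By the definition of $\Omega_{f,x^*}(R')$ as the supremum of admissible radii, this gives $\Omega_{f,x^*}(R')\ge s$. Since $R'>0$, the number $R'$ belongs to the set $\{r>0:\Omega_{f,x^*}(r)\ge s\}$, and so $\Omega^{\leftarrow}_{f,x^*}(s)\le R'$.

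Letting $R'\downarrow R$ then yields $\Omega^{\leftarrow}_{f,x^*}(s)\le R$, which is the claim. This also covers the degenerate case $R=0$, that is $S=\{x^*\}$, because we only ever use radii $R'>0$. When $s=0$ the inequality is immediate anyway: $\Omega_{f,x^*}(r)\ge 0$ for all $r>0$, so the infimum defining $\Omega^{\leftarrow}_{f,x^*}(0)$ equals $0$.

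The only real subtlety, and the step I expect to need care, is this boundary issue. The supremum $R$ need not be attained strictly inside $\mathbb B_R(x^*)$: $S$ may touch the sphere of radius $R$, so one cannot simply claim $S\subseteq\mathbb B_R(x^*)$. Passing through $R'>R$ and taking the limit avoids this without requiring any monotonicity or continuity of $\Omega_{f,x^*}$. Monotonicity was shown in the proof of Theorem~\ref{open cont open equiv}, but it is not needed here.
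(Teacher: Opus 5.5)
Your proposal is correct and follows essentially the same argument as the paper. The paper likewise sets $R=\sup_{x\in S}\|x-x^*\|$, uses $S\subseteq\mathbb B_{R+\epsilon}(x^*)$ to get $s\leq\Omega_{f,x^*}(R+\epsilon)$ and hence $\Omega^{\leftarrow}_{f,x^*}(s)\leq R+\epsilon$, and lets $\epsilon\downarrow 0$. Your remark about the degenerate case $S=\emptyset$, $s=0$ is a small extra that the paper does not address.
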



    \begin{proof}
        Let $R:=\sup_{x\in S}\|x-x^*\|$.  If $R=\infty$, there is nothing to prove. Otherwise, for every $\epsilon>0$, $S\subseteq \mathbb B_{R+\epsilon}(x^*)$.  Since \(\mathbb B_s(f(x^*))\subset f(S)\), it follows that
        \[
            \mathbb B_s(f(x^*))
            \subset f(S)
            \subseteq f(\mathbb B_{R+\epsilon}(x^*)).
        \]
        Therefore \(s\leq \Omega_{f,x^*}(R+\epsilon)\). By the definition of $\Omega_{f,x^*}^{\leftarrow}$, this yields $\Omega_{f,x^*}^{\leftarrow}(s)\leq R+\epsilon$. Letting $\epsilon\downarrow 0$ gives $\Omega_{f,x^*}^{\leftarrow}(s)\leq R$, which is the desired inequality.
    \end{proof}


    Applying Corollary~\ref{reverse norm cor} to the graph of a feedback recovers the gain obstruction in an inverse form.  This formulation is often the most transparent: in order to produce a closed-loop velocity ball of radius $\gamma(r)$, the graph of the feedback must reach radius at least $\Omega_f^{\leftarrow}(\gamma(r))$ in the joint state-control space.


    \begin{theorem}[Inverse-profile gain obstruction]\label{inverse profile obstruction}
        Let $f:\mathbb{R}^n\times\mathbb{R}^m\rightarrow \mathbb{R}^n$ be continuous with $f(0,0)=0$, let \(d:[0,\epsilon)\to[0,\infty)\) be nondecreasing, and assume $u$ is a feedback satisfying $\|u(x)\|\leq d(\|x\|)$ near $0$.  Set $F_u(x)=f(x,u(x))$.  If the closed-loop field is open with rate $\gamma$ for all sufficiently small $r>0$, then
        $$
            \Omega_f^{\leftarrow}(\gamma(r))\leq \sqrt{r^2+d(r)^2}
        $$
        for all sufficiently small $r>0$.  In particular,
        $$
            d(r)\geq \sqrt{\left(\Omega_f^{\leftarrow}(\gamma(r))\right)^2-r^2}
        $$
        whenever the expression on the right is real.
    \end{theorem}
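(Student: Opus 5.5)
The plan is to apply Corollary~\ref{reverse norm cor} to $f$ at the base point $(0,0)$, using the set $S:=G_u(\mathbb{B}_r(0))$, where $G_u(x)=(x,u(x))$ is the graph map from the proof of Theorem~\ref{graph limited brocket inequality}. Since $F_u=f\circ G_u$, we have $f(S)=F_u(\mathbb{B}_r(0))$. The hypothesis that the closed loop is open with rate $\gamma$ gives $\mathbb{B}_{\gamma(r)}(0)\subseteq F_u(\mathbb{B}_r(0))=f(S)$ for all sufficiently small $r>0$. This is exactly the covering hypothesis of Corollary~\ref{reverse norm cor} with $s=\gamma(r)$ and $x^*=(0,0)$, using $f(0,0)=0$.

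The corollary then yields
\[
\sup_{z\in S}\|z\|\geq \Omega_f^{\leftarrow}(\gamma(r)).
\]
It remains to bound the left side. As in the proof of Theorem~\ref{graph limited brocket inequality}, for $x\in\mathbb{B}_r(0)$ with $r$ small enough that the gain bound holds, monotonicity of $d$ gives
\[
\|G_u(x)\|=\sqrt{\|x\|^2+\|u(x)\|^2}\leq\sqrt{r^2+d(\|x\|)^2}\leq\sqrt{r^2+d(r)^2}.
\]
Taking the supremum over $x\in\mathbb{B}_r(0)$ gives the first inequality. For the second, square both sides and subtract $r^2$ to get $d(r)^2\geq (\Omega_f^{\leftarrow}(\gamma(r)))^2-r^2$. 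Take square roots whenever the right side is nonnegative, which is what ``real'' means in the statement; this uses $d\ge 0$. If $\Omega_f^{\leftarrow}(\gamma(r))=\infty$, the first inequality would force a contradiction. So in that regime the hypotheses cannot hold, and the statement is vacuous or consistent.

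The main point requiring care is bookkeeping about the range of $r$. We must take $r$ small enough that three things hold simultaneously: the rate $\gamma$ is valid, $\mathbb{B}_r(0)$ lies inside the domain of $u$ where $\|u(x)\|\le d(\|x\|)$, and $r<\epsilon$ so that $d(r)$ is defined. The intersection of finitely many ``sufficiently small'' ranges is again such a range, so this is routine.

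An alternative route to the same result is to combine Theorem~\ref{graph limited brocket inequality} with the definition of $\Omega_f^{\leftarrow}$. From $\gamma(r)\le\Omega_{F_u}(r)\le\Omega_f(\sqrt{r^2+d(r)^2})$, the radius $\sqrt{r^2+d(r)^2}$ belongs to the set whose infimum defines $\Omega_f^{\leftarrow}(\gamma(r))$. This works provided $\sqrt{r^2+d(r)^2}>0$, which holds for $r>0$. I would mention this as a one-line check.
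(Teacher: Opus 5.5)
Your proposal is correct and is essentially the paper's proof. You apply Corollary~\ref{reverse norm cor} at $(0,0)$ to $S=G_u(\mathbb{B}_r(0))$ to get $\sup_{x}\|G_u(x)\|\geq\Omega_f^{\leftarrow}(\gamma(r))$, bound that supremum by $\sqrt{r^2+d(r)^2}$ as in Theorem~\ref{graph limited brocket inequality}, and rearrange. Your alternative route through Theorem~\ref{graph limited brocket inequality} and the definition of $\Omega_f^{\leftarrow}$ is also valid, and like the paper you implicitly use the standing convention $u(0)=0$, so that the rate-$\gamma$ ball is centered at $F_u(0)=0$.
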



    \begin{proof}
        Since $\Omega_{F_u}(r)\geq \gamma(r)$, we have
        $$
            \mathbb{B}_{\gamma(r)}(0)\subseteq F_u(\mathbb{B}_r(0))=f(G_u(\mathbb{B}_r(0))),
        $$
        where $G_u(x)=(x,u(x))$.  By Corollary~\ref{reverse norm cor}, applied to $f$ at $(0,0)$ and to the set $S=G_u(\mathbb{B}_r(0))$, it follows that
        $$
            \sup_{\|x\|\leq r}\|G_u(x)\|\geq \Omega_f^{\leftarrow}(\gamma(r)).
        $$
        On the other hand, the gain bound (see the proof of Theorem~\ref{graph limited brocket inequality}) gives
        $$
            \sup_{\|x\|\leq r}\|G_u(x)\|\leq \sqrt{r^2+d(r)^2}.
        $$
        Combining these two inequalities gives
        $$
            \Omega_f^{\leftarrow}(\gamma(r))\leq \sqrt{r^2+d(r)^2}.
        $$
        The stated lower bound on $d$ follows by rearranging.
    \end{proof}

    This inverse formulation is a geometric reformulation of the same obstruction expressed in  Corollary~\ref{minimum gain from closed-loop openness}, and is sometimes easier to interpret.  For example, if $\Omega_f(r)\leq Cr^q$, then $\Omega_f^{\leftarrow}(s)\geq (s/C)^{1/q}$.  Substituting $s=\gamma(r)=cr^p$ into Theorem~\ref{inverse profile obstruction} gives
    $$
        d(r)\geq \sqrt{\left(\frac{c}{C}\right)^{2/q}r^{2p/q}-r^2},
    $$
    which is precisely the power-law obstruction in Corollary~\ref{power law}.  Thus the exponent restriction $d(r)\gtrsim r^{p/q}$ admits a natural geometric interpretation: the graph of a gain-limited feedback must extend sufficiently far in the control directions to access the part of the open-loop vector field capable of generating the desired closed-loop velocity ball.


    \begin{remark}
        The estimates in this section are not intended as separate sufficient conditions for stabilization.  Rather, they are geometric consequences of openness.  Their role is to translate a desired closed-loop openness profile into a minimum radius that must be attained in the joint state-control space.  Theorem~\ref{graph limited brocket inequality} expresses this restriction in direct form, while Theorem~\ref{inverse profile obstruction} provides the same restriction in inverse-profile formulation.
    \end{remark}


\section{Conclusion}


    Brockett's condition is not merely a binary yes/no obstruction to stabilization. Its quantitative openness profile controls how much closed-loop openness can be achieved by a feedback whose graph is constrained by a prescribed gain bound.  Theorem~\ref{graph limited brocket inequality} and Theorem~\ref{inverse profile obstruction} show that the openness profile of the closed-loop vector field $F_u(x)=f(x,u(x))$ must fit inside the openness profile of the original system vector field evaluated at the radius accessible to the graph $x\mapsto (x,u(x))$.  Consequently, any prescribed lower openness rate for the closed-loop dynamics imposes a corresponding lower bound on the growth of the feedback.

    In the power-law setting, this relationship admits a particularly transparent interpretation.  If $\Omega_f(r)\lesssim r^q$ while the desired closed-loop field satisfies $\Omega_{F_u}(r)\gtrsim r^p$, then every admissible gain bound must satisfy $d(r)\gtrsim r^{p/q}$ when $p<q$.  In particular, smooth exponential stabilization corresponds to a linear closed-loop profile. Thus, systems whose Brockett profile is of order $r^q$ require feedbacks whose magnitude is at least of order $r^{1/q}$.  The signed-power examples show that these exponents are sharp.

    Thus the openness datum appearing in Brockett's theorem contains more information than the usual binary obstruction alone.  It also encodes quantitative limitations on the gain profiles of stabilizing feedbacks.
    

\backmatter

\section*{Declarations}

\begin{itemize}
\item Funding: The authors did not receive support from any organization for the submitted work.
\item Competing interests:  All authors certify that they have no affiliations with or involvement in any organization or entity with any financial interest or non-financial interest in the subject matter or materials discussed in this manuscript.
\end{itemize}

\bibliography{ref}

\end{document}